\numberwithin{equation}{section}
\newtheorem{theorem}{Theorem} 
\newtheorem{proposition}[theorem]{Proposition}
\newtheorem{lemma}[theorem]{Lemma}
\theoremstyle{remark}
\def\be{\begin{equation}}
\def\ee{\end{equation}}
\def\vp{\varphi}
\def\ve{\varepsilon}
\begin{document}
\Large

\title{On uniformly bounded basis in spaces of holomorphic functions}
\author{Jean Bourgain} \address{(J. Bourgain) Institute for Advanced Study, Princeton, NJ 08540}
\email{bourgain@math.ias.edu}
\thanks{This work was partially supported by NSF grants DMS-1301619}

\begin{abstract}
The main result of the paper is the construction of explicit uniformly bounded basis in the spaces of complex
homogenous polynomials on the unit ball of $C^3$, extending an earlier result of the author in the $C^2$ case.
\end{abstract}
\maketitle

\section
{Introduction}

This Note originates from the recent paper \cite {S} that was kindly brought to the author's attention.
In the introductory part of \cite {S}, the following two problems are put forward.

\medskip
\noindent
{\bf Problem 1.}
{\sl Let $B_d$ be the closed unit ball in $\mathbb C^d$ and $S_d=\partial B_d=\{\zeta \in \mathbb C^d$;
$\Vert \zeta\Vert =\langle \zeta, \zeta\rangle^{\frac 12} =1\}$ the unit sphere.
Denote for $N=1, 2, 3\ldots$
$$
\mathcal P_N=\mathcal P_N^{(d)}= \text{ span}
\{\zeta^\alpha =\zeta_1^{\alpha_1}\cdots\zeta_d^{\alpha_d}; \alpha_i\geq 0 \text { and } |\alpha|=\alpha_1+\cdots+
\alpha_d=N\}\eqno{(1.1)}
$$
the space of degree $N$ homogenous polynomials.

\noindent
Do the spaces $\mathcal P_N$ have orthonormal basis that are uniformly bounded in $L^\infty (B_d)$?}
~{\hfill$\square$}
 
\medskip
\noindent
{\bf Problem 2.}
{\sl Does the Hilbert space of homomorphic polynomials on $S_d$ admit a uniformly bounded orthonormal basis?
Same question for smooth strictly pseudo-convex domain $\Omega \subset\mathbb C^d$.}
\hfill$\square$

\medskip

The first problem was solved affirmatively in \cite {B} if $d=2$, hence also answering Problem 2 for $d=2$.
Extending the approach from \cite {B} to $d>2$ turns out to be not straightforward.
In this paper we will give an construction for $d=3$ which potentially may be generalized to higher dimension, though this could require
additional work.
On the other hand, one can provide an affirmative solution to Problem 2, without going through Problem 1.
The core of the argument is a general result on orthonormal basis, proven in \cite{O-P} (and going back to a construction
due to A.~Olevskii, \cite {Ol}), which seems little known outside the experts' circle.

\section
{Uniformly bounded orthonormal basis}

We start with the following result (Theorem 2 in \cite {O-P}).

\begin{proposition}
Let $E$ be a separable linear subspace of a Hilbert space $L^2(\mu)$, $\mu$ a probability measure.
Then $E$ admits an orthonormal basis consisting of uniformly bounded functions, if and only if

\begin{itemize}
\item [(i)] $E\cap L^\infty(\mu)$ is dense in $E$ in the $L^2(\mu)$-norm
\item [(ii)] $E\cap \{f \in L^\infty (\mu):\Vert f\Vert_\infty \leq 1\}$ is not a totally bounded subset of $L^2(\mu)$.
\end{itemize}
\end{proposition}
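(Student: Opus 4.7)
The necessity direction is immediate: if $(e_n)_{n\ge 1}$ is an orthonormal basis of $E$ with $\|e_n\|_\infty\le M$, then each $e_n\in E\cap L^\infty$ and their finite linear combinations are dense in $E$, so (i) holds. For (ii), the scaled basis $(e_n/M)$ lies in the unit $L^\infty$-ball of $E$ and is pairwise $\sqrt{2}/M$-separated in $L^2$, hence not totally bounded.

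For sufficiency, the plan is first to convert (ii) into a more usable form. Starting from a $\delta$-separated sequence in $E\cap B_{L^\infty}$, weak $L^2$-compactness of $B_{L^\infty}$ combined with lower semicontinuity of $\|\cdot\|_\infty$ under weak $L^2$-convergence produces, by passing to differences, a sequence $v_n\in E$ with $\|v_n\|_\infty\le 2$, $\|v_n\|_{L^2}\ge\eta>0$, and $v_n\to 0$ weakly in $L^2$. The weak-null property is crucial: for any finite-dimensional $V=\text{span}(e_1,\dots,e_m)$ consisting of functions bounded by $K$ in $L^\infty$, the projection $P_V v_n=\sum_j\langle v_n,e_j\rangle e_j$ has coefficients tending to $0$, so $v_n-P_V v_n$ lies in $V^\perp\cap E$, stays uniformly $L^\infty$-bounded, and retains $L^2$-norm $\ge\eta/2$ for $n$ large. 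Thus bounded transverse directions are always available.

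Next I would run an inductive construction. Fixing a countable $L^2$-dense set $(f_k)\subset E\cap L^\infty$ provided by (i), one builds orthonormal $e_1,e_2,\dots$ with $\|e_n\|_\infty\le K_0$ and $\bigcup_n V_n$ dense in $E$, where $V_n=\text{span}(e_1,\dots,e_n)$. The step from $V_{n-1}$ to $V_n$ combines a bounded transverse vector of the preceding paragraph with a component aimed at decreasing the residual $f_{k(n)}-P_{V_{n-1}}f_{k(n)}$ for a rotating target $k(n)$; orthogonality to $V_{n-1}$ is enforced by subtracting the (small-$L^\infty$) projection. The Olevskii-type idea is to spread the approximation of a given $f_k$ over many bounded basis vectors via $\ell^2$-coefficients, rather than trying to absorb it in a single step.

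The main obstacle is precisely the tension between the uniform $L^\infty$-bound on $e_n$ and the need to capture arbitrary, possibly unbounded, target residuals $r=f_k-P_{V_{n-1}}f_k$. A naive greedy choice of $e_n$ as a bounded approximation to $r/\|r\|_2$ fails: (i) supplies $L^2$-dense bounded approximants, but their $L^\infty$-norms blow up when $r$ is unbounded, so one cannot reduce $\|r\|_2$ by a uniform factor in one step. The resolution is that no single $e_n$ need reduce $\|r\|_2$ by a definite factor; it is enough that $\sum_n|\langle r,e_n\rangle|^2=\|r\|_2^2$ in the limit. Guaranteeing this while preserving the $L^\infty$-bound requires a delicate diagonal argument in which the transverse vectors $v_n$ are deployed so that their cumulative span exhausts $E$, while the $L^\infty$-corrections coming from the weak-null behaviour decay rapidly enough that $K_0$ stays independent of $n$; this is the technical heart of the Olevskii--Pelczynski construction.
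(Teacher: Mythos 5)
The paper does not prove this proposition; it is quoted verbatim as Theorem 2 of Ovsepian--Pelczynski \cite{O-P} and then applied, so there is no in-text argument to compare against, and I evaluate your sketch on its own merits. The necessity half is correct and essentially complete: an orthonormal basis $(e_n)$ with $\Vert e_n\Vert_\infty\le M$ immediately gives (i), and $(e_n/M)$ is a $\sqrt 2/M$-separated subset of the unit $L^\infty$-ball of $E$, giving (ii). (One small hypothesis worth making explicit: (ii) forces $\dim E=\infty$, and the equivalence must be read with that implicit restriction, since in the finite-dimensional case every orthonormal basis is uniformly bounded yet (ii) fails.) Your preliminary reduction for sufficiency is also sound: weak $L^2$-compactness of the unit $L^\infty$-ball, differencing along a weakly convergent subsequence, and the observation that subtracting the projection onto any fixed $L^\infty$-bounded finite-dimensional subspace preserves the $L^\infty$-bound and eventually the $L^2$-lower bound.

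The gap is exactly where you flag it, and it is not a small gap: it is the entire content of the sufficiency direction. You write that ``guaranteeing this while preserving the $L^\infty$-bound requires a delicate diagonal argument \dots this is the technical heart of the Olevskii--Pelczynski construction,'' which is an admission that the construction is being cited rather than supplied. Moreover, the scaffolding you do set up --- choosing one bounded $e_n$ at a time so as to chip away at a residual $f_{k(n)}-P_{V_{n-1}}f_{k(n)}$ --- is, as you yourself observe, incapable of producing a uniform $L^\infty$-bound, and it is also not the shape of Olevskii's argument. The mechanism is a block rotation: one places the offending (poorly $L^\infty$-bounded) vector together with a dyadic block of $2^m$ auxiliary bounded vectors furnished by the weakly-null sequence, and rotates the entire block by the $2^m\times 2^m$ Haar orthogonal matrix $H^{(m)}$, whose rows at scale $s$ have magnitude $2^{s/2}/\sqrt{2^m}$ on supports of length $2^{m-s}$; the dyadic calibration of these scales is precisely what lets the rotated vectors absorb the large $L^\infty$-norm with a bound independent of $m$. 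The paper itself performs a concrete instance of this absorption in (3.8)--(3.10). Without setting up the dyadic block bookkeeping, choosing the block sizes against the $L^\infty$-norms of the vectors to be absorbed, and verifying the resulting telescoping $L^\infty$-estimate, the proof of sufficiency has not been given.
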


Let $\Omega$ be a smooth strictly pseudo-convex domain and $L^2(\mu)=L^2(\partial\Omega, \sigma)$ with $\sigma$ the normalized surface
measure of $\partial\Omega$.
We take for $E$ the restriction to $\partial\Omega$ of the linear space of holomorphic polynomials.
Hence condition (i) is obviously satisfied.
For $\Omega= B_d$, results from \cite {R-W} and \cite {K} provide a sequence of elements $p_N\in\mathcal P_N^{(d)}$ such that $\Vert
p_N\Vert_2=1$ and $\Vert p_N\Vert _\infty =C_d$, taking care of condition (ii).

More generally, for $\Omega\subset\mathbb C^d$ smooth and strictly pseudo-convex, a result due to E.~Low \cite{Lo}
asserts in particular that if $\phi>0$ is a continuous function on $\partial\Omega$, then for all $\ve>0$, there exists $g\in A(\Omega)$
(the algebra of holomorphic functions on $\Omega$ that extend continuously to $\bar \Omega$) such that $|g|\leq\phi$ on $\partial\Omega$
and
$$
\sigma(\{\zeta\in\partial\Omega; |g|\not= \phi\})<\ve.\eqno{(2.1)}
$$
Hence $A(\Omega)\cap\{f\in L^\infty(\partial\Omega); \Vert f\Vert_\infty\leq 1\}$
is not totally bounded in $L^2(\partial\Omega)$.
Next, we are invoking a result of Henkin \cite{H}, Kerzman \cite {K} and Lieb \cite {Li} according to which elements of $A(\Omega)$ can
be approximated uniformly on $\bar\Omega$ by functions holomorphic on a neighborhood of $\bar\Omega$, hence by holomorphic polynomials.
Thus in conclusion, we again get condition (ii) satisfied.  We proved

\begin{proposition}
If $\Omega \subset \mathbb C^d$ is a smooth strictly pseudo-convex domain, then the holomorphic polynomials on $\partial\Omega$ admit a
uniformly bounded orthonormal basis.
\end{proposition}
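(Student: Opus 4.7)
The plan is to apply Proposition 1 with $L^2(\mu) = L^2(\partial\Omega, \sigma)$ and $E$ taken to be the restriction of the space of holomorphic polynomials to $\partial\Omega$. The task then reduces entirely to verifying the two hypotheses (i) and (ii). Condition (i) comes essentially for free: polynomials are continuous on the compact set $\bar\Omega$, so $E \subset L^\infty(\partial\Omega)$ and density is automatic. All the content lies in (ii).

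To get (ii), I would first establish the non-total-boundedness of the larger set $A(\Omega) \cap \{f : \Vert f\Vert_\infty \leq 1\}$ in $L^2(\partial\Omega)$, then transfer the property down to polynomials. For the first step the natural tool is E.~Low's theorem \cite{Lo} quoted in the excerpt: given any continuous $\phi > 0$ on $\partial\Omega$ and any $\ve > 0$, there exists $g \in A(\Omega)$ with $|g| \leq \phi$ on $\partial\Omega$ and $\sigma(\{|g|\neq\phi\}) < \ve$. Applying this with a suitable sequence of bump-type weights $\phi_n$ supported on pairwise separated regions of $\partial\Omega$ (or more simply $\phi \equiv 1$ together with varying small exceptional sets) produces a sequence $g_n \in A(\Omega)$ with $\Vert g_n\Vert_\infty \leq 1$ whose pairwise $L^2(\sigma)$-distances remain bounded away from zero. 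Such a sequence cannot have a norm-convergent subsequence, so $A(\Omega) \cap \{\Vert\cdot\Vert_\infty \leq 1\}$ fails to be totally bounded in $L^2(\partial\Omega, \sigma)$.

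The second step is the polynomial approximation. Here I would invoke the Henkin--Kerzman--Lieb result \cite{H,K,Li} that any $f \in A(\Omega)$ is the uniform limit on $\bar\Omega$ of functions holomorphic in a neighborhood of $\bar\Omega$, hence of holomorphic polynomials. Replacing each $g_n$ by a polynomial approximant $p_n$ with $\Vert p_n - g_n\Vert_\infty$ small enough (say, smaller than a tenth of the $L^2$-separation of the $g_n$), and then rescaling by a factor arbitrarily close to $1$ to restore $\Vert p_n\Vert_\infty \leq 1$, yields a sequence in $E \cap \{\Vert\cdot\Vert_\infty \leq 1\}$ whose $L^2$-distances remain uniformly bounded below. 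This furnishes condition (ii) for $E$, and Proposition 1 then delivers the desired uniformly bounded orthonormal basis.

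The main obstacle is really condition (ii) on a general smooth strictly pseudo-convex $\Omega$. On the sphere $\partial B_d$ one has the very direct option of using the Ryll--Wojtaszczyk/Kerzman sequence $p_N \in \mathcal{P}_N^{(d)}$ with $\Vert p_N\Vert_2 = 1$ and $\Vert p_N\Vert_\infty$ uniformly bounded, and homogeneity of different degrees gives automatic $L^2$-orthogonality. For arbitrary smooth strictly pseudo-convex domains no such polynomial construction is at hand, and one is forced to borrow from the function-theoretic side: Low's peak-set construction supplies $L^\infty$-normalized $A(\Omega)$ elements concentrated on small pieces of the boundary, and the Henkin--Kerzman--Lieb approximation is what lets this be converted into polynomial information.
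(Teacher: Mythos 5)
Your argument is exactly the paper's: apply the Ovsepian--Pelczynski criterion (Proposition 1), dispose of (i) trivially, and establish (ii) by combining E.~Low's theorem to show non-total-boundedness of the unit ball of $A(\Omega)$ in $L^2(\partial\Omega)$ with the Henkin--Kerzman--Lieb uniform approximation of $A(\Omega)$ by holomorphic polynomials. You are a bit more explicit than the paper about extracting a separated sequence and about the rescaling step after polynomial approximation, but the structure and the key inputs are identical.
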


\section
{Construction of uniformly bounded orthonormal basis in $\mathcal P_N^{(3)}$}

Answering a question of W.~Rudin, the author proved in \cite {B} that for $d=2$, the spaces $\mathcal P_N^{(2)}$ admit orthonormal basis
that are uniformly bounded in $L^\infty(B_2)$.
In this section, we revisit this construction, seeking for a higher dimensional extension and succeed in doing so for $d=3$.

We believe that (unlike \cite{B}) this approach may be generalizable and will indicate how.

Recall that for $d=2$, the basis are explicit and simple to describe.
More specifically, we introduce in \cite{B} polynomials $\big(\zeta =(z, w)\big)$
$$
\vp_k(\zeta)= (N+1)^{-1/2} \sum^N_{j=0} \sigma_j \,  e^{2\pi i \frac {jk}{N+1}} \ \frac {z^jw^{N-j}}{\Vert z^j w^{N-j}\Vert_2}\eqno{(3.1)}
$$
in $\mathcal P_N$, where $\{\sigma_j\}^N_{j=0}$ is a suitable unimodular sequence, which is taking to be the classical $\pm1$-valued
Rudin-Shapiro sequence
$$
\sigma_j =(-1)^{a_j} \ \text { with } \ a_j=\sum \ve_j\ve_{i+1}\eqno{(3.2)}
$$
and $\ve_i$ the digits in the binary expansion of $n$.

Certainly, there are other choices since the only relevant property of $\{\sigma_j\}$ is bound
$$
\max_\theta \Big|\sum_{j\in I}\sigma_j e(j\theta)\Big|\leq C|I|^{\frac 12}\eqno{(3.3)}
$$
where $I\subset\mathbb Z$ is an arbitrary interval (we use the notation $e(\theta)=e^{2\pi i\theta}$).

\begin{proposition}
The spaces $\mathcal P_N^{(3)} =\text {\rm span}[\zeta_1^{j_1} \zeta_2^{j_2}\zeta_3^{N-j_1-j_2}; 
j_1, j_2\geq 0,  j_1 +j_2\leq N]$
admit uniformly bounded basis.
\end{proposition}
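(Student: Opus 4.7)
The plan is to mimic the $d=2$ scheme (3.1): produce $\vp_{k_1,k_2}$ by modulating the orthonormal monomial basis $e_\alpha = \zeta^\alpha/\Vert\zeta^\alpha\Vert_2$ by a unitary matrix whose entries are unimodular Rudin--Shapiro-like phases times a 2D Fourier character. Concretely, I take $\sigma_{j_1,j_2}$ a unimodular sequence satisfying a 2D analog of (3.3) (for instance the tensor product $\sigma^{(1)}_{j_1}\sigma^{(2)}_{j_2}$ of two independent 1D Rudin--Shapiro sequences) and set
\begin{equation*}
\vp_{k_1,k_2}(\zeta) \;=\; |T_N|^{-1/2} \sum_{(j_1,j_2) \in T_N} \sigma_{j_1,j_2}\, e\!\left(\frac{j_1 k_1 + j_2 k_2}{M}\right) e_{(j_1,j_2)}(\zeta),
\end{equation*}
with $T_N$ the index set of the proposition, $|T_N|=\binom{N+2}{2}$, and the modulus $M$ together with an index set of $|T_N|$ frequencies $(k_1,k_2)$ chosen to make the coefficient matrix unitary. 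The simplest realization is to embed $T_N$ into $(\mathbb Z/M\mathbb Z)^2$ with $M^2\geq |T_N|$, obtain a Parseval tight frame from the product DFT, and extract a basis; extraction does not affect individual $L^\infty$ bounds.

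By the maximum modulus principle the uniform bound need only be verified on $S_3$. Writing $\zeta_i = r_i e^{i\theta_i}$ with $r_1^2+r_2^2+r_3^2 = 1$ and setting $\psi_i = \theta_i - \theta_3 + 2\pi k_i/M$, absorbing the overall phase $e^{iN\theta_3}$, we obtain
\begin{equation*}
|\vp_{k_1,k_2}(\zeta)| \;=\; |T_N|^{-1/2}\Big| \sum_{(j_1,j_2) \in T_N} \sigma_{j_1,j_2}\, e^{i(j_1\psi_1 + j_2\psi_2)}\, a_{j_1,j_2}\Big|,
\end{equation*}
where $a_{j_1,j_2} = r_1^{j_1} r_2^{j_2} r_3^{N-j_1-j_2}/\Vert\zeta^\alpha\Vert_2$. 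Parseval gives $\sum a_{j_1,j_2}^2 = |T_N|$, and by Stirling, $a_{j_1,j_2}^2/|T_N|$ equals the multinomial mass function with parameters $(N;r_1^2,r_2^2,r_3^2)$: a 2D bell curve concentrated near $(Nr_1^2,Nr_2^2)$ with covariance of order $\sqrt N$ along its principal directions. The task reduces to bounding the inner trigonometric sum by $C\sqrt{|T_N|}\sim CN$ uniformly in $\psi_1,\psi_2$.

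For this I would use the layer-cake identity $a = \int_0^\infty \mathbf{1}_{\{a>t\}}\,dt$; because the multinomial distribution is log-concave, each sublevel set $E_t = \{(j_1,j_2) \in T_N: a_{j_1,j_2} > t\}$ is convex and essentially elliptical. It then suffices to prove a 2D analog of (3.3) on convex planar sets,
\begin{equation*}
\max_{\psi_1,\psi_2}\Big|\sum_{(j_1,j_2) \in E} \sigma_{j_1,j_2}\, e(j_1\psi_1 + j_2\psi_2)\Big| \;\leq\; C|E|^{1/2},
\end{equation*}
and integrate in $t$; the Gaussian tails make the integral converge and yield a total bound of $O(\sqrt{|T_N|})$, exactly as in the $d=2$ layer-cake argument. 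For axis-aligned rectangles the product choice $\sigma_{j_1,j_2}=\sigma^{(1)}_{j_1}\sigma^{(2)}_{j_2}$ factors the sum into two 1D Rudin--Shapiro estimates and gives $C|E|^{1/2}$ for free.

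The main obstacle is pushing this cancellation bound from rectangles to the non-rectangular $E_t$. The relevant ellipses are in general tilted, since the multinomial covariance is not diagonal, and a brute-force decomposition of $E_t$ into bounding rectangles or horizontal strips loses a factor of order $|E_t|^{1/2}$, which is fatal. One must either perform a lattice change of variables that diagonalizes the ellipse -- and check that the transformed $\sigma$ still enjoys Rudin--Shapiro cancellation on rectangles -- or decompose $E_t$ along its own principal axes and recombine via a 2D Abel summation, controlling the decomposition loss. This is the step that has no analog in the one-dimensional geometry of \cite{B}; the orthonormality bookkeeping on the triangular index set $T_N$ is a secondary technicality. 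Given a $d$-dimensional version of the cancellation bound on convex lattice sets, the same template would extend to arbitrary $d$, consistent with the generalizability claimed in the introduction.
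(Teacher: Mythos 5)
Your setup is close in spirit to the paper's, but you stop exactly at the step the paper treats as its main new ingredient, and the ingredient you propose to supply there is the one the paper explicitly discards. You correctly reduce to a square-root cancellation bound for the modulating sequence on the sublevel sets of the multinomial weight, and you correctly observe that these sets are tilted (the multinomial covariance has nonzero off-diagonal entries; near the hypotenuse the natural directions are $k_1$ and $k_1+k_2$ rather than $k_1$ and $k_2$). But the tensor Rudin--Shapiro sequence $\sigma^{(1)}_{j_1}\sigma^{(2)}_{j_2}$ only gives cancellation on axis-aligned rectangles and, as you yourself note, loses a fatal factor of order $|E_t|^{1/2}$ under a naive decomposition of a tilted region into such rectangles; the paper says outright that ``Rudin--Shapiro sequence based constructions do not seem to fit our purpose.'' Its actual choice is the quadratic phase $u_{k_1,k_2}=e\big(\sqrt{2}\,(k_1^2+k_2^2)\big)$: squaring, Poisson summation, and the diophantine property $|x|\,\Vert x\sqrt{2}\Vert>c$ give square-root cancellation on mollified boxes (Lemma~4, estimate~(3.16)), and---this is the point---the quadratic phase stays of the same Gauss-sum type after the affine substitution $k_2\mapsto N-k_1-k_2$, yielding the companion estimate~(3.17) on the sheared boxes that arise near the hypotenuse. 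That stability under affine changes of variables is exactly what Rudin--Shapiro lacks and exactly what your proposal leaves unsupplied.

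Two structural points you pass over are also load-bearing. First, you cannot ``embed $T_N$ into $(\mathbb Z/M\mathbb Z)^2$, take a tight frame from the product DFT, and extract a basis'': extracting a subfamily destroys orthogonality, and re-orthogonalizing destroys the individual $L^\infty$ bounds. The paper instead reflects the subtriangle $\Delta''$ to a congruent $\widetilde{\Delta''}$ so that $\Delta'\cup\widetilde{\Delta''}$ is an honest $N\times\frac{N+1}{2}$ lattice rectangle on which the product DFT is already unitary, and orthonormality of the resulting $\eta_j$ is an exact identity. Second, the row $\alpha_1=0$ is split off into a separate subspace $Y$, handled by the $d=2$ construction (which on $\partial B_3$ only gives $\Vert\psi_k\Vert_\infty\lesssim\sqrt N$, not a uniform bound), and those mildly unbounded vectors are then absorbed into the bounded basis for $X$ via Olevskii's Haar-matrix scheme~(3.8)--(3.10). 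Your proposal treats the whole triangle at once and so never meets this issue, but the decomposition $\mathcal P_N=X\oplus Y$ is precisely what lets the reflection trick produce an exact rectangle in the first place.
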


We need some notation.
Let us assume $N$ odd and define
$$
\begin{aligned}
\Delta &= \{(j_1, j_2)\in\mathbb Z^2; j_1, j_2\geq 0, j_1+j_2\leq N\}\\
\Delta_0&= \{(0, j); 0\leq j\leq N\}\\
\Delta' &= \Big\{(j_1, j_2)\in \Delta; j_1\geq 1, 0\leq j_2\leq \frac {N-1}2\Big\}\\
\Delta''&= \Big \{(j_1, j_2)\in\Delta; j_1\geq 1, \frac {N+1}2 \leq j_2\leq N-1\Big\}
\end{aligned}
$$

\centerline{
\begin{minipage}{1.5 in}
\centerline
{\hbox{\includegraphics[width = 3.5in]{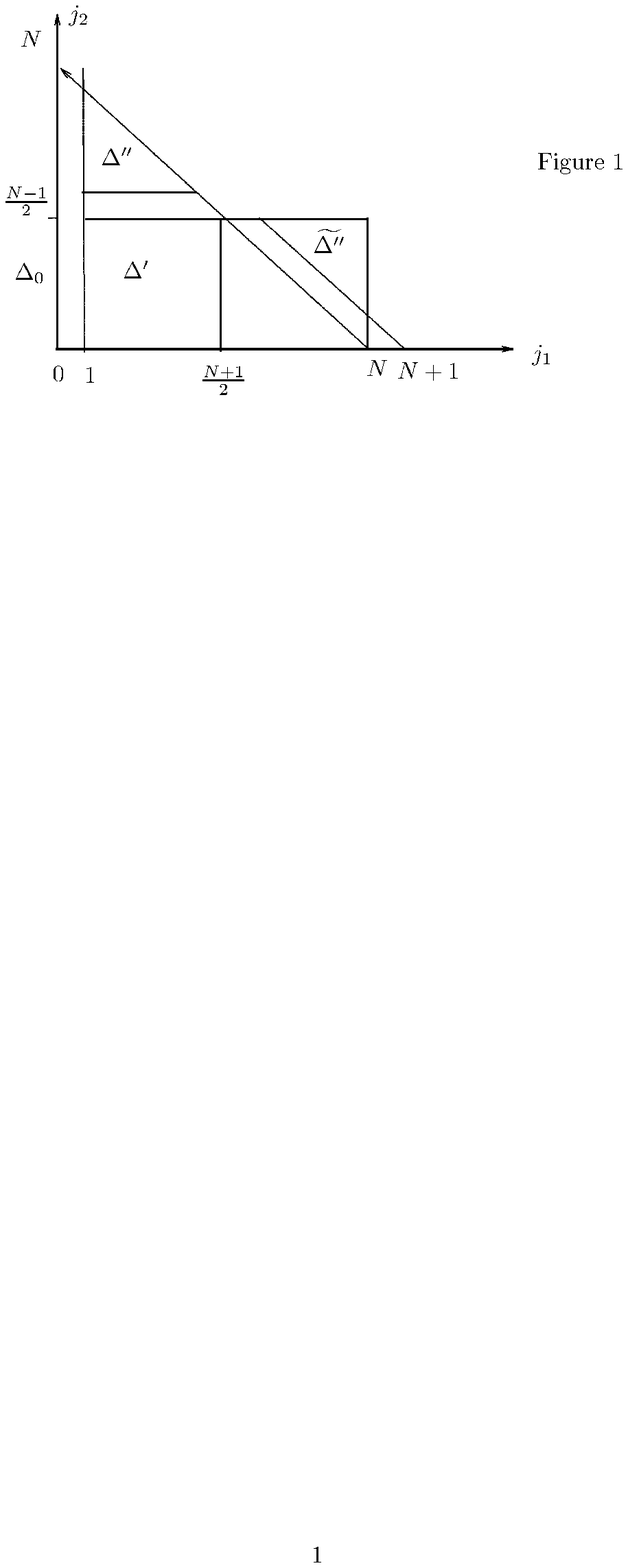}}}
\end{minipage}}

Hence $\Delta =\Delta_0 \cup\Delta'\cup\Delta''$,
$$
\begin{aligned}
&|\Delta|= \frac 12(N+1)(N+2)=D\\
&|\Delta'|+ |\Delta''| =\frac {N+1} 2\cdot N.
\end{aligned}
$$
Write the following orthogonal decomposition of $\mathcal P_N$
$$
\mathcal P_N=X\oplus Y
$$
with
$$
\begin{aligned}
X&=\text{\,\rm span\,}[\zeta^\alpha; (\alpha_1, \alpha_2)\in \Delta'\cup\Delta'']\\
Y&= \text {\, span\,}  [\zeta^\alpha, \alpha_1=0]
\end{aligned}
$$
Invoking Proposition 1.4.9 from \cite{R}, if $\alpha =(\alpha_1, \ldots, \alpha_d)$
$$
\int_{\partial B_d} |\zeta^\alpha|^2 d\sigma =\frac {(d-1)!\, \alpha_1!\cdots \alpha_d!}{(d-1+\alpha_1+\cdots+ \alpha_d)!}
$$
and for $d=3$
$$
\Vert\zeta_1^{j_1} \zeta_2^{j_2} \zeta_3^{N-j_1-j_2}\Vert^2_{L^2(\partial B_3)}=
\frac {2j_1!j_2!(N-j_1-j_2)!}{(N+2)!}.\eqno{(3.5)}
$$
Let us first consider the space
$$
Y=[\zeta_2^{j_2} \zeta_3^{N-j_2}; 0\leq j_2\leq N].
$$

Going back to (3.1), define for $k=0, \ldots, N$ the orthogonal system
$$
\psi_k (\zeta) =(N+1)^{-\frac 12} \sum^N_{j_2=0} \sigma_j \, e\Big(\frac {j_2k}{N+1}\Big) \ \frac {\zeta_2^{j_2} \zeta_3^{N-j_2}}
{\Vert \zeta_2^{j_2}\zeta_3^{N-j_2}\Vert_{L^2(\partial B_3)}}.
$$
Since by (3.4)
$$
\Vert\zeta_2^{j_2}\zeta_3^{N-j_2}\Vert_{L^2(\partial B_3)}=\Big[\frac {2 j_2!(N-j_2)!}{(N+2)!}\Big]^{\frac 12}
$$
and
$$
\Vert\zeta_2^{j_2} \zeta_3^{N-j_2}\Vert_{L^2(\partial B_2)}= \Big[\frac {j_2!(N-j_2)!}{(N+1)!}\Big]^{\frac 12}
$$
it follows from the $d=2$ construction that
$$
\psi_k=\Big(\frac N2+1\Big)^{\frac 12} \vp_k\eqno{(3.6)}
$$
with $\{\vp_k\}$ a uniformly orthonormal basis on $L^2(\partial B_2)$.

In particular, we have
$$
\Vert \psi_k\Vert_\infty \leq cN^{\frac 12}.\eqno{(3.7)}
$$
Assume we constructed a uniformly bounded orthonormal basis $f_1, \ldots, f_{N{\frac {N+1}2}}$ for the space $X$.
One can then apply Olevskii's absorption scheme \cite {Ol} to produce a uniformly bounded basis for $\mathcal P_N^{(3)}$.
We recall the construction.
Define for $k=0, \ldots, D-1$
$$
g_k =a_{k, 0}\, \psi_0+\cdots+ a_{k, N} \, \psi_N+ a_{k, N+1} \, f_1+\cdots+ a_{k, D-1} \, f_{\frac {N(N+1)}{2}}\eqno{(3.8)}
$$
where $A=(a_{k, \ell})_{0\leq k, \ell <D}\in O(D)$ will be specified next.

Let
$$
2^m\leq D< 2^{m+1}
$$
and
$$
\begin{cases}
a_{k, \ell}&= H^{(m)} (k, \ell)  \ \text { if } \ 0\leq k, \ell <2^m\\
a_{k, \ell}&= 0 \ \text { for } \ 0\leq k< 2^m, \ell\geq 2^m\\
a_{k, \ell}&= \delta_{k, \ell} \ \text { for } \ 2^m\leq k< D
\end{cases}
\eqno{(3.9)}
$$
where $H^{(m)}=H$ is the discrete Haar system on $\{0, 1, \ldots, 2^m -1\}$.
Thus
$$
\begin{cases}
H(e_0)&= \frac 1{\sqrt {2^m}} \ 1_{[0, 2^m[}\\
H(e_1)&= \frac 1{\sqrt {2^m}} (1_{[0, 2^{m-1}[}- 1_{[2^{m-1}, 2^m[})\\
H(e_2)&= \frac 1{\sqrt{2^{m-1}}} ( 1_{0, 2^{m-2}}[ - 1_{[2^{m-2}, 2^{m-1}[})\\
H(e_3)&=\frac 1{\sqrt{2^{m-1}}} (1_{[2^{m-1}, 2^{m-1} + 2^{m-2}[} - 1_{[2^{m-1}+ 2^{m-2}, 2^m[})\\
\text{etc. }&
\end{cases}
\eqno{(3.10)}
$$

Clearly $A\subset \mathcal O(D)$.
In view of (3.7)-(3.20), one easily verifies that
$$
\begin{aligned}
\Vert g_k\Vert_\infty &\leq C\Big(\frac 1{\sqrt{2^m}}+\frac {\sqrt 2}{\sqrt{2^m}}+\cdots+ \frac {(\sqrt 2)^{^2{\log N}}}
{\sqrt{2^m}} \Big) \sqrt N+C\\
& < C\frac N{\sqrt D} + C> C.
\end{aligned}
$$
Hence it remains to construct a uniformly basis for $X$.

Going back to Figure 1, let $\widetilde{\Delta''}$ be a triangle congruent to $\Delta''$ with vertices at $\big(\frac {N+3}2, \frac {N-1} 2\big)$,
$(N, 1), \big(N, \frac {N-1}2\big)$ and let $T:\Delta''\to \widetilde{\Delta''}$ be an affine map.
For $k=(k_1, k_2)\in \Delta''$, denote $\tilde k= Tk$.
Note that
$$
\mathbb Z^2 \cap (\Delta'\cup \widetilde{\Delta''}) = \{ 1, \ldots, N\}\times \Big\{0, \ldots, \frac {N-1}2\Big\}.\eqno{(3.11)}
$$
Define for $j=(j_1, j_2)\in \Delta'$
$$
\eta_j =\frac 1{\sqrt{\frac {N(N+1)}2}} \Big(\sum_{k\in\Delta'} u_k \, e\Big(\frac {j_1 k_1}N+\frac {j_2 k_2}{\frac {N+1}2}\Big) 
 e_k
+\sum_{k\in\Delta''} u_k \, e\Big(\frac {j_1 \tilde k_1}{N} +\frac {j_2 \tilde k_2}{\frac {N+1}2}\Big) e_k\Big)
\eqno{(3.12)}
$$
and for $j\in \Delta''$
$$
\eta_j= \frac 1{\sqrt{N\frac {N+1}2}} \Big(\sum_{k\in\Delta'} u_k \, e\Big(\frac {\tilde {j_1} k_2}{N}
+ \frac {\tilde j_2 k_2}{\frac {N+1}2}
\Big) \, e_k +\sum_{k\in\Delta''} u_k e\Big(\frac {\tilde j_1\tilde k_i}N+\frac {\tilde j_2\tilde k_2}{\frac {N+1} 2}\Big)\Big)\eqno{(3.13)}
$$
where
$$
e_j=\frac {\zeta_1^{j_1 } \zeta_2^{j_2} \zeta_3^{N-j_1-j_2}}{\Vert\zeta_1^{j_1}\zeta_2^{j_2} \zeta_3^{N-j_1-j_2} \Vert_{L^2(\partial B_3)}}
$$

\medskip

\noindent
and $(u_k)=(u_{k_1k_2})$ will be some unimodular sequence.

We first verify that $(\eta_j)_{j\in\Delta'\cup\Delta''}$ is an orthonormal system.

By orthogonality and (3.11), if $j, j'\in\Delta'$
$$
\begin{aligned}
\langle \eta_j, \eta_{j'}\rangle &=\frac 1{N\frac {N+1}2}\Big\{\sum_{k\in\Delta'} e\Big(\frac {j_1-j_1'}N k_1+\frac {j_2- j_2'}{\frac {N+1} 2}
k_2\Big)+ \sum_{k\in\widetilde{\Delta''}} e\Big(\frac {j_1-j_1'}N k_1
+\frac {j_2-j_2'}{\frac {N+1} 2} k_2\Big)\Big\}\\
&=\frac 1{N\frac {N+1}2} \ \sum_{\substack{1\leq k_1\leq N\\ 0\leq k_2\leq \frac {N-1}2}} \
e\Big(\frac {j_1-j_1'}N k_1 +\frac {j_2-j_2'}{\frac {N+1} 2} k_2 \Big) =\delta_{j, j'}
\end{aligned}
$$
and similarly for $j, j'\in \Delta''$.

For $j\in \Delta', j'\in\Delta''$, we obtain
$$
\frac 1{N \frac {N+1}2} \ \sum_{\substack{1\leq k_1<N\\ 0\leq k_2 \leq \frac {N-1}2}}
e\Big(\frac {j_1-(\widetilde {j'})_1}N k_1+ \frac {j_2-(\widetilde{j'})_2} {\frac {N+1}2} k_2\Big)
=\delta_{j, \widetilde{j'}} =0.
$$
Hence $(\eta_j)_{j\in\Delta'\cup\Delta''}$ is a basis for $X$.

Remains to introduce the sequence $u_k$.
This is the main novel input compared with \cite{B} (Rudin-Shapiro sequence based constructions do not seem to fit our
purpose).

Define
$$
u_{k_1, k_2} =e\big(\sqrt 2(k_1^2+k_2^2)\big). \eqno{(3.14)}
$$
The only role  of $\sqrt 2$ is its diophantine property
$$
\min_{x\in\mathbb Z, x\not=0} |x| \, \Vert x\sqrt 2\Vert>c>0\eqno{(3.15)}
$$
$(\Vert \ \Vert$ the distance to the nearest integer).
Since $\sqrt 2$ is a quadratic irrational, it has a periodic and hence bounded sequence of partial quotients, hence
(3.15).

We will rely on the following two estimates, which also explain the role of (3.15).

\begin{lemma}
Let $I_1, I_2$ be two arbitrary intervals of size $M_1, M_2$ and centers $c_1, c_2$.
Rather than summing over $I_1\times I_2$ we introduce a mollification, considering a smooth, symmetric, compactly supported
bump function $0\leq \rho\leq 1$ and a weight $\rho\big(\frac {k_1-c_1}{M_1}\big)\rho\big(\frac {k_2-c_2}{M_2}\big)$.
The following inequalities hold
$$
\max_{\psi_1, \psi_2\in\mathbb R} \Big|\sum_k \rho\Big(\frac {k_1-c_1}{M_1}\Big)
\rho\Big(\frac {k_2-c_2}{M_2}\Big) u_k\Big| \lesssim \sqrt{M_1M_2}
\leqno{(3.16)}
$$
and
$$
\max_{\psi_1, \psi_2\in\mathbb R}\Big|\sum_k\rho\Big(\frac {k_1-c_1}{M_1}\Big) \rho\Big(\frac {k_2-c_2}{M_2}\Big)
u_{k_1, N-k_1-k_2}\Big|\lesssim \sqrt{M_1M_2}.\leqno{(3.17)}
$$
\end{lemma}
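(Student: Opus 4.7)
The plan is to reduce everything to the one-dimensional smooth Gauss sum estimate
\[
\Big|\sum_{k\in\mathbb Z}\rho\Big(\tfrac{k-c}{M}\Big)\,e(\alpha k^2+\beta k)\Big|\;\lesssim\;\sqrt{M},
\]
valid uniformly in $c,\beta\in\mathbb R$ when $\alpha$ is a badly-approximable irrational; this will cover $\alpha\in\{\sqrt{2},2\sqrt{2},\sqrt{2}/2\}$, the only coefficients that appear. It is obtained by iterating the Jacobi-type modular identity for smoothed theta sums, the bounded continued-fraction property of $\sqrt{2}$ (which delivers the Diophantine bound (3.15)) keeping the iteration from blowing up. I read the displays (3.16)--(3.17) with the evidently intended linear phase $e(\psi_1 k_1+\psi_2 k_2)$ in place, as that is the only way the $\psi_i$ can enter the stated sums.

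Inequality (3.16) is then immediate: both the weight and the phase factorize over $(k_1,k_2)$, so the sum equals a product of two one-dimensional smooth Gauss sums of scales $M_1$ and $M_2$, each bounded by $\sqrt{M_i}$ via the estimate above.

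For (3.17) I would first perform the unimodular change of variables $m=N-k_1-k_2$. This factorizes the phase as $e(\sqrt{2}(k_1^2+m^2)+\tilde\psi_1 k_1+\tilde\psi_2 m)$ while turning the weight into $\rho_1(k_1)\tilde\rho_2(k_1+m)$ with $\tilde\rho_2(\xi)=\rho(\tfrac{N-c_2-\xi}{M_2})$ --- a skew product. A second unimodular change $(u,v)=(k_1,\,k_1+m)$ factorizes the weight as $\rho_1(u)\tilde\rho_2(v)$ at the price of producing the coupled phase $e(2\sqrt{2}u^2+\sqrt{2}v^2-2\sqrt{2}uv+\text{lin.})$. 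I would then carry out the inner $u$-sum, which is a 1D Gauss sum $H(v)$ of size $|H(v)|\lesssim\sqrt{M_1}$; completing the square in $u$ extracts an explicit factor $e(-\tfrac{\sqrt{2}}{2}v^2+\text{linear in }v)$ from $H(v)$ and leaves a residual smoothed Gauss sum $\widetilde H(v)$ whose stationary point $u^{*}(v)=v/2+c'$ depends linearly on $v$. Standard non-stationary-phase arguments confine the effective support of $\widetilde H$ in $v$ to the window where $u^{*}(v)\in\supp\rho_1$, a set of width $\sim M_1$. Feeding this into the outer $v$-sum and combining the quadratic phases ($\sqrt{2}v^2-\tfrac{\sqrt{2}}{2}v^2=\tfrac{\sqrt{2}}{2}v^2$) reduces matters to a 1D smooth Gauss sum over $v$ with effective weight $\tilde\rho_2(v)\cdot\mathbf{1}_{\text{window}}$ of width $\min(M_1,M_2)$, which the 1D estimate (applied now with coefficient $\sqrt{2}/2$) bounds by $\sqrt{\min(M_1,M_2)}$. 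Combining,
\[
|S|\;\lesssim\;\sqrt{M_1}\cdot\sqrt{\min(M_1,M_2)}\;\leq\;\sqrt{M_1 M_2}.
\]

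The main obstacle is the 1D smooth Gauss sum bound itself --- this is where (3.15) enters in an essential way --- together with the careful bookkeeping of phases through the two changes of variables. Secondary care is needed to verify that each quadratic coefficient produced along the way ($2\sqrt{2}$, $\sqrt{2}/2$, etc., all of the form $c\sqrt{2}$ with $c\in\mathbb Q^{\ast}$) inherits from $\sqrt{2}$ the bounded partial-quotients hypothesis, so that each 1D invocation is legitimate; after that, the proof is a mechanical completing-the-square / Poisson exercise.
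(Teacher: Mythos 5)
Your treatment of (3.16) is fine and essentially reproduces the paper's content: the weight and the phase $e(\psi_1 k_1+\psi_2 k_2)e(\sqrt2(k_1^2+k_2^2))$ factor over $(k_1,k_2)$, and each one--dimensional factor is a smoothed Gauss sum with badly--approximable quadratic coefficient; the paper proves exactly this one--dimensional bound by Weyl differencing ($\ell=k-k'$, $\ell'=k+k'$), Poisson summation in $\ell'$, and the Diophantine inequality (3.15). Your concern about the rational multiples of $\sqrt2$ is also easily dispatched: $\|c\sqrt2\,x\|\gtrsim 1/|x|$ for $c\in\{2,1/2\}$ follows from (3.15) and $\|2y\|\le 2\|y\|$.

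For (3.17), however, there is a genuine gap at the step where you ``confine the effective support of $\widetilde H$ in $v$ to the window where $u^*(v)\in\operatorname{supp}\rho_1$.'' That non--stationary--phase decay is a property of oscillatory \emph{integrals}, not of exponential sums over $\mathbb Z$. In fact, for integer $n$ one has
\[
\sum_{u\in\mathbb Z}\rho_1(u)\,e\Big(\alpha\Big(u-u^*-\tfrac{n}{2\alpha}\Big)^2\Big)
=e\big(nu^*+\tfrac{n^2}{4\alpha}\big)\sum_{u\in\mathbb Z}\rho_1(u)\,e\big(\alpha(u-u^*)^2\big),
\]
since $e(-nu)=1$ for $u\in\mathbb Z$; so $|\widetilde H(v)|$ is \emph{periodic} in the center shift $u^*(v)$ with period $\tfrac1{2\alpha}$ and does not decay at all as $u^*(v)$ leaves $\operatorname{supp}\rho_1$. (Equivalently, after Poisson summation the number of frequencies $n$ with stationary point inside $\operatorname{supp}\rho_1$ is always $\sim M_1$, independently of where $u^*(v)$ lies.) Without the claimed confinement, the outer $v$--sum contributes a full factor $M_2$, and the argument only yields $|S|\lesssim M_2\sqrt{M_1}$, which is weaker than $\sqrt{M_1M_2}$ whenever $M_2\gg1$.

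The paper sidesteps exactly this obstacle by never trying to iterate one--dimensional Gauss sum bounds on the coupled phase. It instead squares, changes variables $\ell=k-k'$, $\ell'=k+k'$, which replaces the quadratic phase by the \emph{bilinear} one $e(\sqrt2((\ell_1+\ell_2)(\ell_1'+\ell_2')+\ell_1\ell_1'))$, performs Poisson summation in $\ell_1',\ell_2'$ to produce the kernels $\big(L_i'\|\sqrt2(\cdot)\|^2+1/L_i'\big)^{-1}$, and then sums over $(\ell_1,\ell_2)$ (via a unimodular substitution making the two arguments independent) using (3.15), arriving at $|S|^2\lesssim(L_1+L_1')(L_2+L_2')\lesssim M_1M_2$. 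To repair your proof you would need to replace the confinement step by an actual cancellation argument in the outer $v$--sum (e.g.\ Cauchy--Schwarz in $v$ followed by opening $|\widetilde H(v)|^2$), which effectively reproduces the paper's Weyl differencing.
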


\begin{proof}
\hfill

\noindent
\boxed{3.16}\hfill\break
\noindent
Denoting $S=\sum_k \rho\big(\frac {k_1-c_1}{M_1}\big) \rho\big(\frac {k_2-c_2}{M_2}\big)
e(k.\psi) u_{k}$, we obtain by squaring
$$
\begin{aligned}
|S|^2 &=\sum_{k, k'} \rho\Big(\frac {k_1-c_1}{M_1}\Big) \rho \Big( \frac {k_1'-c_1}{M_1}\Big)\rho
\Big(\frac {k_2-c_2}{M_2}\Big)\rho \Big(\frac{k_2'-c_2}{M_2}\Big)\\
&e\big((k-k').\psi\big) \, e\Big(\big(\sqrt 2\ ( k_1-k_1')(k_1+k_1')+(k_2-k_2')(k_2+k_2')\big)\Big)\\
&=\sum_{k, k'} \rho\Big(\frac {k_1}{M_1}\Big)\rho\Big(\frac {k_1'}{M_1}\Big)\rho\Big(\frac {k_2}{M_2}\Big)
\rho\Big(\frac {k_2'}{M_2}\Big) e\big((k-k')\psi'\big)\\
& e\big(\sqrt 2((k_1-k_1')(k_1+k_1')+ (k_2-k_2')(k_2+k_2'))\big)
\end{aligned}
$$
with $\psi'=\psi+2\sqrt 2 c$.
Making a change of variables $\ell =k -k'$, $\ell'= k+k'$, we obtain
$$
\sum_{\ell, \ell'} \rho\Big(\frac {\ell_1+\ell_1'}{2M_1}\Big) \rho\Big(\frac {\ell_1-\ell_1'}{2M_1}\Big)\rho
\Big( \frac {\ell_2-\ell_2'}{2M_2}\Big) \rho\Big(\frac {\ell_2-\ell_2'}{2M_2}\Big)
e(\ell.\psi') \, e\big(\sqrt 2 (\ell_1\ell_1'+\ell_2\ell_2')\big)
$$ 
which may be bounded by expressions of the form
$$
\Big[\sum_{\ell_1, \ell_1'}\rho_1 \Big(\frac {\ell_1}{L_1}\Big) \rho_2 \Big(\frac {\ell_1'}{L_1'}\Big)
e(\ell_1\psi_1') \, e(\sqrt 2\ell_1 \ell_1') \Big]
\ \Big[\sum_{\ell_2, \ell_2'} \rho_1 \Big(\frac {\ell_2}{L_2}\Big)\rho_2 \Big(\frac {\ell_2'}{L_2'}\Big)
e(\ell_2 \psi_2') e(\sqrt 2\ell_2\ell_2')\Big]
\eqno{(3.18)}
$$
where $L_1, L_1'\lesssim M_1, L_2, L_2' \lesssim M_2$.

We estimate each of the factors of (3.18).
We have
$$
\Big|\sum_{\ell, \ell'} \rho_1\Big(\frac \ell L\Big) \rho_2 \Big(\frac {\ell'}{L'}\Big)
e(\ell\psi') e(\sqrt 2 \ell_1 \ell_2')\Big| \leq \sum_\ell \rho_1
\Big(\frac \ell L\Big) \Big|\sum_{\ell'} \rho_2 \Big(\frac {\ell'}{L'}\Big)
 e(\sqrt 2\ell\ell')\Big|.
\eqno{(3.19)}
$$
By Poisson summation, the inner sum in (3.19) equals
$$
\begin{aligned}
&L'\sum_{k\in Z} \hat\rho_2\big(L'(k-\sqrt 2 \ell)\big)\leq \\
&L' \sum_{k\in\mathbb Z} \frac 1{(L')^2|k -\sqrt 2\ell|^2+1} \qquad \text { (since $\rho_2$ is smooth)} \\
& \leq \frac C{L'} +\frac 1{L' \Vert\sqrt 2 \ell\Vert^2+\frac 1{L'}}
\end{aligned}
$$
and summation over $\ell $ gives
$$
\frac C{L'} \sum_{\ell\lesssim L} \frac 1{\Vert\sqrt 2\ell\Vert^2 +\frac 1{(L')^2}}. \eqno{(3.20)}
$$

At this point, we use (3.15).
Clearly (3.15) implies
$$
|\{\ell \leq L; \Vert\sqrt 2\ell\Vert \sim 2^{-s} \}\Vert \lesssim \frac L {2^s} +1
$$
which permits to bound (3.20) by
$$
\frac C{L'} \sum_{s, 2^s\leq L'} \Big(\frac L{2^s}+1\Big)4^s\lesssim C(L+L').
$$
Hence (3.18) is bounded by $M_1.M_2$, proving (3.16).

\medskip

\noindent
\boxed{(3.17)}

Now
$$
S=\sum_k \rho\Big(\frac {k_1-c_1}{M_1}\Big) \rho\Big(\frac {k_2-c_2}{M_2}\Big) \, e(k.\psi)
\, e\big(\sqrt 2\big( (N-k_1-k_2)^2 +k^2_1\big)\big)
$$
hence
$$
|S| =\Big|\sum_k \rho\Big(\frac {k_1-c_1}{M_1}\Big)\rho\Big(\frac {k_2-c_2}{M_2}\Big)
\,  e (k.\psi') \, e\big(\sqrt 2\big((k_1+k_1)^2 +k_1^2\big)\big)\Big|
$$
for some $\psi' \in \mathbb R$.
Proceeding as before, we obtain instead of (3.18)
the following bound on $|S|^2$
$$
\begin{aligned}
&\Big|\sum_{\ell_1, \ell_1', \ell_2, \ell_2'} \rho_1\Big(\frac{\ell_1}{L_1}\Big)\rho_2\Big(\frac {\ell_1'}{L_1'}\Big)
\rho_1\Big(\frac {\ell_2}{L_2}\Big)\rho_2 \Big(\frac {\ell_2'}{L_2'}\Big)
e(\ell.\psi') \, e\big(\sqrt 2\big((\ell_1+\ell_2)(\ell_1'+\ell_2')+\ell_1\ell_1'\big)\big)\Big|\\
&\leq \sum_{\ell_1, \ell_2} \rho_1 \Big(\frac {\ell_1}{L_1}\Big) \rho_1\Big(\frac {\ell_2}{L_2}\Big)
\Big|\sum_{\ell_1'} \rho_2 \big(\frac {\ell_1'}{L_1'}\Big)
\, e\big( \sqrt 2(2\ell_1+\ell_2)\ell_1'\big)\Big|. \Big|\sum_{\ell_2'} \rho_2 \Big(\frac {\ell_2'}{L_2'}\Big)
e\big(\sqrt 2 (\ell_1+\ell_2)\ell_2'\big)\Big|\\
&\leq \sum_{\substack{\ell_1\lesssim L_1\\ \ell_2\lesssim L_2}} \ \frac 1{L_1' \Vert\sqrt 2(2\ell_1+\ell_2)\Vert^2+\frac 1{L_1'}}\cdot
\frac 1{L_2' \Vert\sqrt 2(\ell_1+\ell_2)\Vert^2+\frac 1{L_2'}}.
\end{aligned}
$$
Assuming $L_1\geq L_2$, we obtain (performing first summation over $\ell_2$)
$$
\begin{aligned}
&\sum_{\substack{\ell_1\lesssim L_1\\ \ell_2 \lesssim L_2}} \ \frac 1{L_1' \Vert \sqrt 2 \ell_1 \Vert^2+\frac 1{L_1'}}\cdot
\frac 1{L_2' \Vert\sqrt 2(2\ell_1-\ell_2)\Vert ^2+\frac 1{L_2'}}\\
&< C(L_2+L_2') \sum_{\ell_1\lesssim L_1} \ \frac 1{L_1' \Vert\sqrt 2\ell_1\Vert^2+\frac 1{L_1'}}
<C(L_2+L_2')(L_1+L_1')< c M_1M_2
\end{aligned}
$$
proving (3.17).
\end{proof}

The next distributional considerations are very similar to those in \cite{B}.
Fix $\zeta \in 0B_3$.
We have by (3.5)
$$
e_{k_1, k_2} (\zeta)= \frac {\zeta_1^{k_1} \zeta_2^{k_2} \zeta_3^{N-k_1-k_2}} { (2k_1!k_2!(N-k_1-k_2)!)^{\frac 12}}
\sqrt{(N+2)!}.\eqno{(3.21)}
$$
Let us assume $N-k_1-k_2\asymp N$.
Otherwise, assuming say $k_2\asymp N$, we switch variables, writing $k_2=N-k_1-k_3$ and in this case
$$
e_{k_1, k_3}(\zeta) =\frac {\zeta_1^{\ell_1} \zeta_3^{k_3} \zeta_2^{N-k_1-k_3}}{(2k_1!k_3! (N-k_1-k_3)!)^{\frac 12}}
\sqrt{(N+2)!}.\eqno{(3.22)}
$$
Writing
$$
(3.21) = e\big(k_1\psi_1+k_2\psi_2+(N-k_1-k_3)\psi_3\big)
\ \frac {|\zeta_1|^{k_1}|\zeta_2|^{k_2}(1-\zeta _1^2 -\zeta^2_2)^{\frac {N-k_1-k_2} 2}}
{(2k_1! k_2! (N-k_1-k_2)!)^{\frac 12}} \ \sqrt{(N+2)!}
$$
for some $\psi_1, \psi_2, \psi_3$ (note that this first factor is harmless in view of the formulation of Lemma 4), we first need to
analyze the distribution of
$$
\frac {|\zeta_1|^{k_1}|\zeta_2|^{k_2} (1-\zeta_1^2 -\zeta_2^2)^{\frac {N-k_1-k_2}2}}
{(k_1!k_2!(N-k_1-k_2)!)^{\frac 12}} \ \sqrt{(N+2)!}.\eqno{(3.23)}
$$
Set $t_1=|\zeta_1|^2, t_2 =|\zeta_2|^2$.
By Stirling's formula
$$
\begin{aligned}
(3.23) &\sim \frac {t_1^{k_{1/2}} t_2^{k_{2/2}}(1-t_1-t_2)^{\frac {N-k_1-k_2}2} (N+2)^{\frac {N+2}2} (N+2)^{\frac 14}}
{k_1^{1/2} k_2^{1/2} (N-k_1-k_2)^{\frac {N-k_1-k_2}2} k_1^{\frac 14} k_2^{\frac 14} (N-k_1-k_2)^{\frac 14}}\\
&\sim\frac {N.N^{\frac 14}}{k_1^{1/4} k_2^{1/4} (N-k_1-k_2)^{\frac 14}} 
\Big(\frac {t_1}{k_1N^{-1}}\Big)^{k_{1/2}} \ \Big(\frac {t_2}{k_2N^{-1}}\Big)^{k_{2/2}}
\Big(\frac {1-t_1-t_2}{1-k_1 N^{-1}-k_2 N^{-1}}\Big)^{\frac {N-k_1-k_2}2}\end{aligned}
\eqno{(3.24)}
$$
and because of the normalization factor $\frac 1{\sqrt{N\frac {N+1}2}}$ in (3.12), (3.13), we may drop the $N$ factor in
(3.24).

Write for $u=t+\Delta u$, $\Delta u=o(t)$
$$
\Big(\frac tu\Big)^u = e^{-(t+\Delta u)(\frac {\Delta u}t -\frac 12 (\frac {\Delta u}t)^2+\cdots)}
= e^{-\Delta u -\frac {(\Delta u)^2}{2t}+\cdots}
$$
Hence (3.24) gives (after removal of the $N$-factor)
$$
\frac {N^{\frac 14}}
{k_1^{\frac 14} k_2^{\frac 14}(N-k_1-k_2)^\frac 14} 
\ e^{-\frac N2 [\frac {(k_1N^{-1} -t_1)^2}{t_1}+
\frac {(k_2N^{-1}-t_2)^2}{t_2} + \frac {((k_1+k_2)N^{-1}-t_1-t_2)^2}{1-t_1 -t_2}+\cdots]}\eqno{(3.25)}
$$
and the distribution in $(k_1, k_2)$-space localizes to
$$
\begin{cases}
&|k_1 -t_1 N|\lesssim\sqrt{t_1 N}\\
&|k_2 -t_2 N|\lesssim  \sqrt {t_2N}.
\end{cases}
\eqno{(3.26)}
$$
Thus, if we fix a center $\bar k = ([t_1 N], [t_2N])$ (3.26) corresponds to the tile
$$
Q_k =\{ k\in\Delta'\cup \Delta''; |k_1-\bar k_1|\lesssim \sqrt {\bar k_1}, |k_2-\bar k_2|\lesssim \sqrt {\bar k_2}\}.\eqno{(3.27)}
$$
Note that for (3.22), we obtain a tile with a different shape
$$
Q_{\bar k}' =\{ k\in\Delta'\cup\Delta''; |k_1-\bar k_1 |\lesssim \sqrt{\bar k_1}; |k_1+k_2-\bar k_1 -\bar k_2|\lesssim \sqrt {N-\bar k_1-
\bar k_2}\}\eqno{(3.28)}
$$
or with
$$
|k_1-\bar k_1|\lesssim \sqrt{\bar k_1} \text { replaced by $|k_2-\bar k_2|\lesssim \sqrt{\bar k_2}$}.\eqno{(3.29)}
$$

\centerline{
\begin{minipage}{1.5 in}
\centerline
{\hbox{\includegraphics[width = 3.5in]{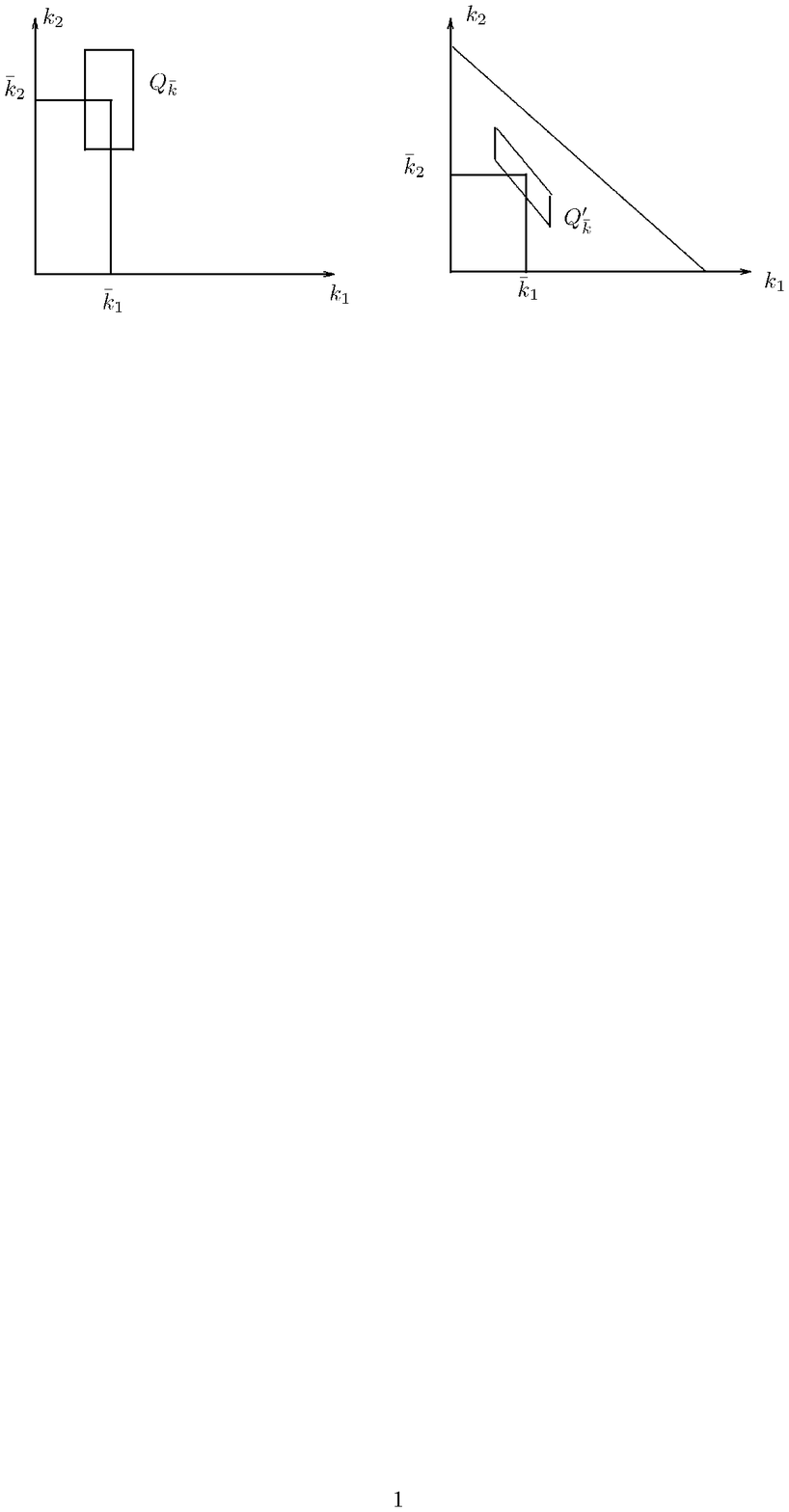}}}
\end{minipage}}

Going back to (3.12), (3.13), some attention is required due to the presence of the different factors
$$
e \Big(j_1 \frac {k_1}{N} +j_2\frac {k_2}N\Big) \ \text { and } \ e\Big(j_1 \frac {\tilde k_1}N + j_2\frac {\tilde k_2}N\Big)
$$
depending on whether $k\in \Delta'$ or $k\in\Delta''$.
Each argument is affine in $k$, but with a different expression.
Hence it is natural to apply Lemma 4 to the intersections
$$
Q_k \cap\Delta' \quad Q_k\cap \Delta''\eqno{(3.30)}
$$
$$
Q_k'\cap \Delta' \quad Q_k' \cap\Delta''.\eqno{(3.31)}
$$
The $Q_k$ appear when $N-k_1-k_2\asymp N$ and hence $Q_k\cap\Delta'$, $Q_k\cap\Delta''$ are still boxes.

If $Q_k'\cap \Delta'\not=\phi$, $Q_k' \cap\Delta''\not= \phi$ (assuming, as we may, that $N-k_1-k_2<\frac N{100}$),
clearly $k_1\approx \frac N2, k_2\approx \frac N2$ and $Q_k'$ has length $\sim\sqrt N$.
Thus we may then in either case (3.28), (3.29) take for $Q_{\bar k}'$ a box
$$
Q_{\bar k}' =\{ (k_1, k_2)\in\Delta'\cup\Delta''; |k_2-\bar k_2|\lesssim \sqrt N \ \text { and } \ |k_1+k_2-\bar k_1- \bar k_2|\lesssim
\sqrt{N-\bar k_1-\bar k_2}\}.
$$
The intersections $Q_{\bar k}'\cap \Delta', Q_{\bar k}'' \cap\Delta''$ have the same structure, i.e. $k_2$ and $k_1+k_2$ restricted to
suitable intervals.

Applying Lemma 4 to the summation for $k\in Q_{\bar k} \cap\Delta'$, $k\in Q_{\bar k}\cap \Delta''$ (after proper mollification as required
in Lemma 4) gives the bound
$$
C\frac {N^{\frac 14}}{\bar k_1^{\frac 14} \bar k_2^{\frac 14} (N- \bar k_1-\bar k_2)^{\frac 14}} (\bar k_1\bar k_2)^{\frac 14} <C
$$ 
and for $k\in Q_{\bar k}'\cap \Delta', k\in Q_{\bar k}' \cap\Delta''$ (assuming $\bar k_2\asymp N$)
$$
C\frac {N^{\frac 14}}{\bar k_1^{\frac 14} \bar k_2^{\frac 14} (N-\bar k_1 -\bar k_2')^{\frac 14}}
\big(\bar k_1(N-\bar k_1-\bar k_2)\big)^{\frac 14} < C.
$$
Similarly to \cite {B}, we perform a tiling of $\Delta' \cup\Delta''$ as dictated by (3.25) and exploit the exponentially decaying factors
to get a bounded collected contribution (the reader will easily check details).
This completes the proof of Proposition 3.

\end{document}